\newtheorem{theorem}{Theorem}[section]
\newtheorem{lemma}[theorem]{Lemma}
\theoremstyle{remark}
\numberwithin{equation}{section}
\renewcommand{\P}{\mathcal{P}_{2g+1}}
\title[The mixed second moment of prime Dirichlet $L$-functions]{The mixed second moment of prime quadratic Dirichlet $L$-functions over function fields}
\author{Christopher G. Best}
\address{Department of Mathematics, University of Exeter, Exeter, EX4 4QF, United Kingdom}
\email{cgb212@exeter.ac.uk}
\date{\today}
\subjclass[2010]{Primary 11M38; Secondary 11M06, 11M50}
\keywords{mixed moments, derivatives of Dirichlet $L$-functions, function fields}
\begin{document}

\begin{abstract}
We compute an asymptotic formula for the mixed second moment of the $\mu$-th and $\nu$-th derivatives of quadratic Dirichlet $L$-functions over monic, irreducible polynomials in the function field setting.
\end{abstract}

\maketitle

\section{Introduction}

In this paper we study the mixed second moment of derivatives of quadratic Dirichlet $L$-functions over monic, irreducible polynomials in $\mathbb{F}_q[t]$. In the case of the Riemann zeta function, for integers $\mu,\nu \geq 0$, Ingham \cite{Ingham27} considered a mixed second moment and obtained the asymptotic formula 
\begin{equation*}
    \int_1^T \zeta^{(\mu)}(\tfrac{1}{2}+it) \zeta^{(\nu)}(\tfrac{1}{2}-it) \, dt \sim \frac{1}{\mu+\nu+1} T (\log T)^{\mu+\nu+1},
\end{equation*}
as $T \to \infty$. Since $\zeta^{(\mu)}(\tfrac{1}{2}-it)=\overline{\zeta^{(\mu)}(\tfrac{1}{2}+it)}$, Ingham's result also implies that

\begin{equation*}
    \int_1^T |\zeta^{(\mu)}(\tfrac{1}{2}+it)|^2 \, dt \sim \frac{1}{2\mu+1} T (\log T)^{2\mu+1}.
\end{equation*}
The fourth moment of the derivative of the zeta function was considered by Conrey \cite{Conrey88} who showed that

\begin{equation*}
    \int_1^T |\zeta'(\tfrac{1}{2}+it)|^4 \, dt \sim \frac{61}{1680 \pi^2} T \left( \log\frac{T}{2\pi} \right)^8.
\end{equation*}
Additionally, Conrey proved that as $m \to \infty$,

\begin{equation*}
    \frac{\pi^2}{6} C_{2,m} \sim \frac{1}{16 m^4},
\end{equation*}
where

\begin{equation*}
    C_{k,m}:=\lim_{T \to \infty} T^{-1} \left( \log\frac{T}{2\pi} \right)^{-k^2-2km} \int_1^T |\zeta^{(m)}(\tfrac{1}{2}+it)|^{2k} \, dt.
\end{equation*}

Random matrix theory serves as a very useful tool for formulating conjectures on the analytic properties of the Riemann zeta function and other $L$-functions. In particular, it is now well known that the zeta function can be modelled by the characteristic polynomials of random unitary matrices. In this direction, Conrey, Rubinstein and Snaith \cite{CRS06} studied the moments of the derivative of characteristic polynomials $\Lambda_A(s)$ of matrices $A \in U(N)$ chosen from the Circular Unitary Ensemble (CUE). They proved that as $N \to \infty$ with $k \in \mathbb{N}$,

\begin{equation*}
    \int_{U(N)} |\Lambda_A'(1)|^{2k} \, dA \sim b_k N^{k^2+2k},
\end{equation*}
where $dA$ denotes the normalised Haar measure on the group of unitary matrices $U(N)$ and where $b_k$ has an explicit expression in terms of a determinant involving modified Bessel functions of the first kind. Their result led them to then conjecture that as $T \to \infty$,

\begin{equation*}
    \int_1^T |\zeta'(\tfrac{1}{2}+it)|^{2k} \, dt \sim a_k b_k (\log T)^{k^2+2k},
\end{equation*}
where $a_k$ is the same arithmetic factor appearing in the conjectural formula for the moments of the zeta function itself and is given in terms of an Euler product, see, for example, \cite{CFKRS05,KS00a}. 


Here we consider a mixed second moment of arbitrary derivatives of quadratic Dirichlet $L$-functions in the function field setting. Let $\mathbb{F}_q$ be a finite field with $q$ odd. We denote by $\mathcal{M}$ the set of monic polynomials in $\mathbb{F}_q[t]$ and by $\mathcal{M}_n$ and $\mathcal{M}_{\leq n}$ the sets of monic polynomials of degree $n$ and of degree at most $n$, respectively. Also, $\mathcal{P}$ denotes the set of monic, irreducible  polynomials and $\mathcal{P}_n$ denotes the set of monic, irreducible polynomials of degree $n$. Similarly, $\mathcal{H}$ denotes the set of monic, square-free polynomials and $\mathcal{H}_n$ the set of monic, square-free polynomials of degree $n$.

Our main result concerns the mixed second moment of derivatives of quadratic Dirichlet $L$-functions over monic, irreducible polynomials.

\begin{theorem} \label{joint moment thm}
Let $\mu, \nu \geq 0$ be integers. Then, as $g \to \infty$,

\begin{equation*}
    \frac{1}{|\P|} \sum_{P \in \P} \frac{L^{(\mu)}(\tfrac{1}{2},\chi_P) L^{(\nu)}(\tfrac{1}{2},\chi_P)}{(\log q)^{\mu+\nu}}=c(\mu,\nu) \cdot (2g+1)^{\mu+\nu+3}+O(g^{\mu+\nu+2}),
\end{equation*}
where

\begin{equation}
    c(\mu,\nu)=\frac{1}{2^{\mu+\nu+3} \zeta_q(2)} \left( (-1)^{\mu+\nu} A(\mu,\nu)+\sum_{m=0}^{\mu} \sum_{n=0}^{\nu} \binom{\mu}{m} \binom{\nu}{n} (-2)^{\mu+\nu-m-n} A(m,n) \right),
\end{equation}
$\zeta_q(s)$ is the zeta-function of $\mathbb{F}_q[t]$, and

\begin{equation} \label{A(m,n) def}
    A(m,n):=\frac{1}{2 (m+n+3)} \int_0^1 \left( x^{m+1} (2-x)^n+x^{n+1} (2-x)^m \right) dx.
\end{equation}
\end{theorem}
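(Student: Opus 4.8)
The plan is to exploit the fact that, since $P$ is irreducible of odd degree $2g+1$, the $L$-function is a polynomial $L(u,\chi_P)=\sum_{n=0}^{2g}c_n(\chi_P)\,u^n$ in $u=q^{-s}$, with $c_n(\chi_P)=\sum_{f\in\mathcal{M}_n}\chi_P(f)$. Since $\frac{d}{ds}=-u\log q\,\frac{d}{du}$, differentiating $\mu$ times and evaluating at $s=\tfrac12$ gives the exact identity $(\log q)^{-\mu}L^{(\mu)}(\tfrac12,\chi_P)=(-1)^\mu\sum_{n=0}^{2g}n^\mu c_n(\chi_P)\,q^{-n/2}$. The normalised product in the theorem then becomes $(-1)^{\mu+\nu}\sum_{m,n=0}^{2g}m^\mu n^\nu c_m(\chi_P)c_n(\chi_P)q^{-(m+n)/2}$, and since $c_m(\chi_P)c_n(\chi_P)=\sum_{f\in\mathcal{M}_m,\,h\in\mathcal{M}_n}\chi_P(fh)$ the entire problem reduces to averaging $\chi_P(fh)$ over $P\in\P$.

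Before averaging I would fold the coefficient sums using the functional equation $c_n(\chi_P)=q^{\,n-g}c_{2g-n}(\chi_P)$, which replaces the range $0\le n\le 2g$ by $0\le n\le g$ at the cost of the symmetric weight $W_\mu(n):=n^\mu+(2g-n)^\mu$. This is the crucial manoeuvre: it restricts $f,h$ to degree at most $g$, so that $\deg(fh)\le 2g<\deg P$ and hence $P\nmid fh$ for every $P\in\P$. I expect the error control to be the main obstacle. For $fh\neq\square$ I would apply quadratic reciprocity to write $\sum_{P\in\P}\chi_P(fh)$ as a sum of a nontrivial character modulo the squarefree part of $fh$ over primes of degree $2g+1$, and bound it via the prime polynomial theorem for characters (equivalently, the Riemann Hypothesis for the relevant $L$-function) by $\ll\deg(fh)\,q^{(2g+1)/2}$; dividing by $|\P|\sim q^{2g+1}/(2g+1)$ and summing the folded weights over all $f,h$ of degree $\le g$ yields a negligible total. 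I want to emphasise that this bound is hopeless without first folding, since over the full range $0\le m,n\le 2g$ the weights $q^{(m+n)/2}$ reach $q^{2g}$ and overwhelm the main term; the functional equation is exactly what makes the off-diagonal error small.

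The main term comes from the diagonal $fh=\square$, for which $\chi_P(fh)=1$ identically, so $|\P|^{-1}\sum_{P}\chi_P(fh)=1$. Writing $f=ab^2$, $h=cd^2$ with $a,c$ squarefree, one has $fh=\square$ iff $a=c$, whence the number of admissible pairs of degrees $(m,n)$ is $S(m,n)=q^{(m+n)/2}\sum_{a}q^{-\deg a}$, the sum running over squarefree $a$ with $\deg a\le\min(m,n)$ and $\deg a\equiv m\equiv n\pmod 2$; equivalently $q^{-(m+n)/2}S(m,n)$ is the coefficient of $X^mY^n$ in $\frac{1-q^{-1}X^2Y^2}{(1-XY)(1-X^2)(1-Y^2)}$. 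Counting squarefree polynomials then gives, for $m\equiv n\pmod 2$, the asymptotic $q^{-(m+n)/2}S(m,n)=\frac{\min(m,n)}{2\zeta_q(2)}+O(1)$, the factor $\zeta_q(2)^{-1}=1-q^{-1}$ being the value of the numerator of this generating function at its diagonal pole. Collecting everything, the moment equals $\frac{(-1)^{\mu+\nu}}{4\zeta_q(2)}\sum_{m,n=0}^{g}W_\mu(m)W_\nu(n)\min(m,n)+O(g^{\mu+\nu+2})$, where the factor $\tfrac12$ coming from the parity constraint has been folded into the constant and the complementary oscillating sum is relegated to the error term.

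To finish I would evaluate this sum by a Riemann-sum approximation: putting $m=g\alpha$, $n=g\beta$ turns it into $\frac{(-1)^{\mu+\nu}}{4\zeta_q(2)}g^{\mu+\nu+3}\int_0^1\!\!\int_0^1(\alpha^\mu+(2-\alpha)^\mu)(\beta^\nu+(2-\beta)^\nu)\min(\alpha,\beta)\,d\alpha\,d\beta$ up to an error $O(g^{\mu+\nu+2})$. Using $\min(\alpha,\beta)=\int_0^1\mathbf{1}_{[u<\alpha]}\mathbf{1}_{[u<\beta]}\,du$ the double integral factors as $\int_0^1 P_\mu(u)P_\nu(u)\,du$ where $P_\mu(u)=\frac{(2-u)^{\mu+1}-u^{\mu+1}}{\mu+1}$, and the factors $(2-u)$ here are precisely the source of the $(2-x)$ appearing in \eqref{A(m,n) def}. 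Expanding these powers by the binomial theorem, matching the resulting one-dimensional integrals against the definition of $A(m,n)$, and rewriting $g^{\mu+\nu+3}=2^{-(\mu+\nu+3)}(2g+1)^{\mu+\nu+3}+O(g^{\mu+\nu+2})$ reorganises the answer into the stated constant $c(\mu,\nu)$. This final identity is a routine but lengthy piece of calculus; I would sanity-check it in the cases $(\mu,\nu)=(0,0)$ and $(1,0)$, where it collapses to $2A(0,0)=\tfrac13$.
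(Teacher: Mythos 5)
Your proposal is correct, and it takes a genuinely different route from the paper. The paper begins with the approximate functional equation for the \emph{product} $L(\tfrac12+\alpha,\chi_P)L(\tfrac12+\beta,\chi_P)$ from \cite{BFK23} (a single sum over $d(f)\le 2g$ weighted by $\tau_{\alpha,\beta}(f)$), differentiates in the shifts, and evaluates the diagonal by counting factorisations $f^2=f_1f_2$ with the hyperbola method; the two-term shape of $c(\mu,\nu)$ falls out directly from the two pieces of that functional equation. You instead expand each derivative exactly as $(\log q)^{-\mu}L^{(\mu)}(\tfrac12,\chi_P)=(-1)^\mu\sum_{n\le 2g}n^\mu c_n(\chi_P)q^{-n/2}$, fold each factor separately via $c_n=q^{n-g}c_{2g-n}$, and parametrise the diagonal $fh=\square$ by $f=ab^2$, $h=ad^2$ with $a$ square-free; your remark that folding is what keeps $d(fh)\le 2g$ and hence makes the Weil bound win is exactly the right point for your setup (the paper's truncation at $d(f)\le 2g$ plays the same role there). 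Your route is more self-contained, needing only the functional equation of a single $L$-function, and it produces the leading constant in the compact closed form
\begin{equation*}
  c(\mu,\nu)=\frac{(-1)^{\mu+\nu}}{2^{\mu+\nu+5}\,\zeta_q(2)\,(\mu+1)(\nu+1)}\int_0^1\bigl((2-u)^{\mu+1}-u^{\mu+1}\bigr)\bigl((2-u)^{\nu+1}-u^{\nu+1}\bigr)\,du ,
\end{equation*}
whereas the paper's route lands directly on the stated combination of the $A(m,n)$. The one step you must actually carry out is the identity equating these two expressions, which you defer as ``routine calculus'' and check only for small $(\mu,\nu)$: it does hold in general (I verified further cases, e.g.\ $(1,1)$ and $(2,0)$), and a clean proof writes $\tfrac{1}{m+n+3}=\int_0^1 t^{m+n+2}\,dt$ inside $A(m,n)$ and resums the binomial sums using $\sum_{m\le\mu}\binom{\mu}{m}(-2)^{\mu-m}x^{m+1}=(-1)^\mu x(2-x)^\mu$. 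Two smaller points to tidy: the folding counts the $n=g$ coefficient with weight $g^\mu$ rather than $2g^\mu$ (harmless, absorbed in $O(g^{\mu+\nu+2})$), and the claim that the parity-oscillating sum $\sum_{m,n}(-1)^{m+n}W_\mu(m)W_\nu(n)\min(m,n)$ is $O(g^{\mu+\nu+2})$ deserves a one-line partial-summation argument, since $\min(m,n)$ is only Lipschitz rather than smooth.
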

Our method of proof for Theorem \ref{joint moment thm} involves using an approximate functional equation for the product of two shifted $L$-functions and then taking derivatives of this expression with respect to the shift parameters. We then compute the main term by looking at the contribution of the diagonal terms in the resulting sums and bound the error using the Weil bound. As mentioned in \cite[Remark 1.3]{DD21}, one could derive a formula for the mixed moments by first computing the shifted moments and then taking derivatives. However, this approach leads to very complicated expressions when the orders of the derivatives are not fixed and small.

The study of moments of derivatives of $L$-functions over function fields has already seen a significant amount of interest. Andrade and Yiasemides \cite{AY21} have obtained asymptotic formulae for the first, second and mixed fourth moment of derivatives of Dirichlet $L$-functions, where the average is over all non-trivial characters modulo a monic, irreducible $Q \in \mathbb{F}_q[t]$. Similarly to the Riemann zeta-function, this family of $L$-functions has unitary symmetry type and as such, one sees a very clear analogy between the results of \cite{AY21} and those of Conrey \cite{Conrey88} on the zeta function.

Andrade and Rajagopal \cite{AR16} and Andrade and Jung \cite{AJ21} studied the mean values of the derivatives $L^{(n)}(\tfrac{1}{2},\chi_D)$ over the hyperelliptic ensemble $\mathcal{H}_{2g+1}$. Their general formula \cite[Theorem 3.1]{AJ21} implies that for any integer $n \geq 1$, as $g \to \infty$,

\begin{equation} \label{AJ asymp}
\frac{1}{|\mathcal{H}_{2g+1}|} \sum_{D \in \mathcal{H}_{2g+1}} L^{(n)}(\tfrac{1}{2},\chi_D) \sim \frac{(-1)^n}{2(n+1)} \cdot \mathcal{A}(1) \cdot (2g+1)^{n+1}.
\end{equation}
The factor $\mathcal{A}(1)$ is an arithmetic term given in the form of an Euler product which also appears in the asymptotic formula for the first moment of $L(\tfrac{1}{2},\chi_D)$ of Andrade and Keating \cite{AK12}. In \cite{BJ19}, Bae and Jung used the approach of Florea \cite{Florea17a} to obtain lower order terms in the asymptotic formula for the first moment of $L''(\tfrac{1}{2},\chi_D)$ obtained by Andrade and Rajagopal in \cite{AR16}. It is also shown in \cite{BJ19} that for these quadratic $L$-functions,

\begin{equation*}
    \frac{L'(\tfrac{1}{2},\chi_D)}{-\log q}=g L(\tfrac{1}{2},\chi_D),
\end{equation*}
and so the moments of $L'(\tfrac{1}{2},\chi_D)$ may be obtained easily from the moments of $L(\tfrac{1}{2},\chi_D)$. In particular, one has all the moments of $L'(\tfrac{1}{2},\chi_D)$ up to the fourth using the results of Florea \cite{Florea17a, Florea17b, Florea17c}.

Djankovi{\'c} and \DJ oki{\'c} \cite{DD21} considered the mixed second moment of $L(s,\chi_D)$ and its second derivative and obtained the asymptotic formula \footnote{In \cite{DD21}, the main result is given in terms of the completed $L$-function rather than $L(s,\chi_D)$ but, as discussed in \cite[Remark 1.2]{DD21}, the formula given in (\ref{DD asymp}) follows from \cite[Theorem 1.1]{DD21} and Florea's formula for the second moment of $L(\tfrac{1}{2},\chi_D)$ in \cite{Florea17b}.}

\begin{equation} \label{DD asymp}
\frac{1}{|\mathcal{H}_{2g+1}|} \sum_{D \in \mathcal{H}_{2g+1}} \frac{L(\tfrac{1}{2},\chi_D) L''(\tfrac{1}{2},\chi_D)}{\log^2 q} \sim \frac{1}{80} \cdot \frac{\mathcal{B}(1)}{\zeta_q(2)} \cdot (2g+1)^5.
\end{equation}
Similarly to (\ref{AJ asymp}), $\mathcal{B}(1)$ is an arithmetic factor, given as an Euler product, also appearing in the main term of the second moment of $L(\tfrac{1}{2},\chi_D)$ obtained by Florea \cite{Florea17b}.

The mean values of derivatives of quadratic Dirichlet $L$-functions over monic and irreducible polynomials were first studied by Andrade \cite{And19} who obtained an asymptotic formula for the first moment of $L'(\tfrac{1}{2},\chi_P)$ and $L''(\tfrac{1}{2},\chi_P)$. In \cite{Jung22}, Jung extended the results of \cite{And19} to give an asymptotic formula for the first moment of $L^{(n)}(\tfrac{1}{2},\chi_P)$ over $\P$ for all integers $n \geq 1$. Jung's result implies that at as $g \to \infty$,

\begin{equation} \label{Jung asymp}
    \frac{1}{|\mathcal{P}_{2g+1}|} \sum_{P \in \mathcal{P}_{2g+1}} L^{(n)}(\tfrac{1}{2},\chi_P) \sim \frac{(-1)^n}{2(n+1)} \cdot (2g+1)^{n+1}.
\end{equation}
We note the similarity between (\ref{AJ asymp}) and (\ref{Jung asymp}) as both families of $L$-functions have symplectic symmetry type. We also include the following result on the twisted first moment of $L^{(k)}(\tfrac{1}{2},\chi_P)$ which is a generalisation of Jung's result. Before stating the result, we will denote by $[x]$ the largest integer that is at most $x$ and write $d(f)$ for the degree of a polynomial $f$. For any integers $k,n\geq 0$, we let
\begin{equation*}
    J_{k}(n):=\sum_{m=1}^n m^k.
\end{equation*}
Faulhaber's formula states that 

\begin{equation*}
    J_{k}(n)=\frac{1}{k+1} \sum_{m=0}^{k} \binom{k+1}{m} B_m^{+} n^{k+1-m},
\end{equation*}
where $B_n^{+}$ are the second Bernoulli numbers. In particular, $J_{k}(n)$ is a polynomial in $n$ of degree $k+1$ with zero constant term.

\begin{theorem} \label{twisted first moment}
Let $k \geq 0$ be an integer. Also, let $l \in \mathcal{M}$ and write $l=l_1 l_2^2$ with $l_1,l_2 \in \mathcal{M}$ and $l_1$ square-free. Then, as $g \to \infty$,
    \begin{align*}
        \frac{1}{|\mathcal{P}_{2g+1}|} \sum_{P\in\mathcal{P}_{2g+1}} \frac{L^{(k)}(\tfrac{1}{2},\chi_P) \chi_P(l)}{(\log q)^k} & =\frac{(-1)^k}{|l_1|^{1/2}} \sum_{m=0}^k \binom{k}{m} 2^m d(l_1)^{k-m} J_m([\tfrac{g-d(l_1)}{2}]) \\ 
        & +\frac{1}{|l_1|^{1/2}} \sum_{m=0}^k \binom{k}{m} (-2g)^{k-m} \sum_{i=0}^m \binom{m}{i} 2^i d(l_1)^{m-i} J_i([\tfrac{g-1-d(l_1)}{2}]) \\
        & +O \left (q^{-g/2} g^{k+1} d(l) \right).
    \end{align*}
\end{theorem}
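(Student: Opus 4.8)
My plan is to exploit the fact that, because $P$ is irreducible of odd degree $2g+1$, the $L$-function $L(s,\chi_P)$ is a polynomial in $q^{-s}$ of degree $2g$; no genuine approximate functional equation is needed, as one has the exact expansion
\begin{equation*}
L(s,\chi_P)=\sum_{n=0}^{2g} A_n(\chi_P)\,q^{-ns},\qquad A_n(\chi_P):=\sum_{f\in\mathcal{M}_n}\chi_P(f),
\end{equation*}
the coefficients vanishing for $n\ge 2g+1$ since $\chi_P$ is non-principal. Differentiating $k$ times and evaluating at $s=\tfrac12$ gives at once
\begin{equation*}
\frac{L^{(k)}(\tfrac12,\chi_P)}{(\log q)^k}=(-1)^k\sum_{n=0}^{2g} n^k\, q^{-n/2}\,A_n(\chi_P),
\end{equation*}
so the problem reduces to understanding the twisted coefficient sums $\sum_{P}A_n(\chi_P)\chi_P(l)$.

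First I would fold the range $n>g$ back onto $n\le g$ using the functional equation $A_{2g-n}(\chi_P)=q^{\,n-g}A_n(\chi_P)$. Setting $n=2g-m$ in the upper half turns $q^{-n/2}$ into $q^{-m/2}$ and the weight $n^k$ into $(2g-m)^k$, yielding
\begin{equation*}
\frac{L^{(k)}(\tfrac12,\chi_P)}{(\log q)^k}=(-1)^k\Bigg[\sum_{n=0}^{g} n^k q^{-n/2}A_n(\chi_P)+\sum_{m=0}^{g-1}(2g-m)^k q^{-m/2}A_m(\chi_P)\Bigg].
\end{equation*}
This split is the source of the two main terms: the first sum produces the term with $d(l_1)^{k-m}$ and $J_m([\tfrac{g-d(l_1)}{2}])$, while the folded sum, via $(2g-m)^k=(-1)^k((-2g)+m)^k$, produces the term with $(-2g)^{k-m}$ and $J_i([\tfrac{g-1-d(l_1)}{2}])$.

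Next I would multiply by $\chi_P(l)$, sum over $P\in\mathcal{P}_{2g+1}$ and interchange summation, so that everything rests on the character sum $\mathcal{S}(fl):=\sum_{P\in\mathcal{P}_{2g+1}}\chi_P(fl)$. The main term comes from those $f$ making $fl$ a perfect square. As $l=l_1l_2^2$ with $l_1$ square-free, this happens exactly when $f=l_1 r^2$ for monic $r$, and then $\chi_P(fl)=1$ for all $P\nmid l_1l_2r$, so $\mathcal{S}(fl)=|\mathcal{P}_{2g+1}|+O(\#\{P\mid l_1l_2r\})$. For each admissible degree the number of such $f$ is $q^{(n-d(l_1))/2}$, and the weight collapses: $q^{-n/2}q^{(n-d(l_1))/2}=|l_1|^{-1/2}$. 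Writing $n=d(l_1)+2j$ (and $m=d(l_1)+2j$), expanding $(d(l_1)+2j)^k$ and $(2g-d(l_1)-2j)^k$ by the binomial theorem and summing over $j$ turns the resulting power sums in $j$ into the Faulhaber sums $J_m([\tfrac{g-d(l_1)}{2}])$ and $J_i([\tfrac{g-1-d(l_1)}{2}])$; dividing by $|\mathcal{P}_{2g+1}|$ then reproduces the two stated main terms. As a check, taking $l=1$ (so $l_1=1$) collapses these to Jung's asymptotic (\ref{Jung asymp}).

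The principal obstacle is controlling the non-square $f$. When $fl$ is not a square, quadratic reciprocity---using that $\deg P=2g+1$ is odd, so the reciprocity sign is independent of $P$---rewrites $\chi_P(fl)=\pm(\tfrac{P}{h})$, where $h$ is the square-free part of $fl$ and $(\tfrac{\cdot}{h})$ is a non-principal quadratic character modulo $h$, of degree $d(h)\le n+d(l)$. The explicit formula together with the Riemann Hypothesis for $L(u,(\tfrac{\cdot}{h}))$ over $\mathbb{F}_q[t]$ then gives $|\mathcal{S}(fl)|\ll \frac{n+d(l)}{2g+1}\,q^{g+1/2}$. Summing this trivially over the $q^n$ polynomials $f\in\mathcal{M}_n$ against the weights $n^k q^{-n/2}$ (and $(2g-m)^k q^{-m/2}$ in the folded sum), the total is dominated by the top of the range and is $\ll \frac{g^{k+1}}{2g+1}q^{3g/2+1/2}d(l)$; dividing by $|\mathcal{P}_{2g+1}|\sim q^{2g+1}/(2g+1)$ yields $O(q^{-g/2}g^{k+1}d(l))$, as claimed, with the discarded divisors $P\mid l_1l_2r$ from the square case absorbed into the same error. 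The delicate points throughout are keeping the dependence on both $d(l)$ and the derivative order $k$ uniform, and checking that the binomial bookkeeping assembles exactly into the two Faulhaber-sum expressions of the statement.
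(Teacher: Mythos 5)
Your proposal follows essentially the same route as the paper: the paper quotes from \cite{AJ21} precisely the folded expression for $L^{(k)}(\tfrac{1}{2},\chi_P)$ that you derive from the functional equation, then splits the twisted sum according to whether $fl$ is a square (parametrised as $f=l_1r^2$, with the binomial expansion producing the Faulhaber sums $J_i$) or not (handled by the Weil bound of Lemma \ref{Weil bound}, which is exactly the estimate you re-derive via reciprocity and the Riemann Hypothesis). The one slip is the exponent in your functional-equation relation, which should read $A_{2g-n}(\chi_P)=q^{g-n}A_n(\chi_P)$; the folded formula you then display is nevertheless the correct one, so this is only a typo.
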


Lastly, random matrix theory provides us with predictions for the asymptotic behaviour of the moments of derivatives of the $L$-functions mentioned above. The families of quadratic $L$-functions $L(s,\chi_D)$ and $L(s,\chi_P)$ are examples of families with symplectic symmetry and so we use the ensemble of random unitary symplectic matrices to model the families and formulate conjectures. In \cite{AB24}, Andrade and the author obtain asymptotic formulae for the joint moments of derivatives of the characteristic polynomials of these matrices. They prove that for non-negative integers $k_1, k_2$ and  $n_1, n_2$,

\begin{equation} \label{Sp asymp}
    \int_{Sp(2N)} \left( \Lambda_A^{(n_1)}(1) \right)^{k_1} \left( \Lambda_A^{(n_2)}(1) \right)^{k_2} dA = b^{Sp}_{k_1, k_2}(n_1, n_2) \cdot (2N)^{k(k+1)/2+k_1 n_2+k_2 n_2} \left( 1+O(N^{-1}) \right),
\end{equation}
where $k=k_1+k_2$. Here, $Sp(2N)$ denotes the group of $2N \times 2N$ unitary symplectic matrices and $dA$ is the Haar measure. Also, the leading order coefficient $b^{Sp}_{k_1, k_2}(n_1, n_2)$ can be written explicitly in the form of a combinatorial sum over partitions (see Theorems 2.1 and 2.2 in \cite{AB24} for a precise expression). The result in (\ref{Sp asymp}) allows for conjectures to made for the corresponding mixed moments of $L$-functions with symplectic symmetry. For instance, for the family $L(s,\chi_P)$, the conjecture is that
\begin{equation*}
    \frac{1}{|\P|} \sum_{P \in \P} \frac{L^{(n_1)}(\tfrac{1}{2},\chi_P)^{k_1} L^{(n_2)}(\tfrac{1}{2},\chi_P)^{k_2}}{(\log q)^{n_1+n_2}} \sim \eta_k \cdot b^{Sp}_{k_1, k_2}(n_1, n_2) \cdot (2g+1)^{k(k+1)/2+k_1 n_1+k_2 n_2},
\end{equation*}
where $\eta_k$ is a certain arithmetic factor in the form of an Euler product. More specifically, $\eta_k$ is the same arithmetic factor present in the conjectural asymptotic formula for the $k$-th moment of $L(\tfrac{1}{2},\chi_P)$ due to Andrade, Jung and Shamesaldeen \cite{AJS21}. See Conjecture 2.2 and Theorem 4.1 in \cite{AJS21} for further details on their conjecture and the arithmetic term. A similar formula is also conjectured to hold for the family $L(s,\chi_D)$ over $\mathcal{H}_{2g+1}$ with a corresponding arithmetic factor. The results of (\ref{AJ asymp}), (\ref{DD asymp}) and (\ref{Jung asymp}) all agree with the prediction of the conjecture based on random matrix theory since it is shown in \cite{AB24} that

\begin{equation*}
    b_{0,1}^{Sp}(0,n)=\frac{(-1)^n}{2(n+1)} \ \textrm{and} \ b_{1,1}^{Sp}(0,2)=\frac{1}{80}.
\end{equation*}

In regards to the mixed second moment considered in Theorem \ref{joint moment thm}, we see that the main term is of the correct size as predicted by the conjecture. The conjecture also states that the leading order coefficient should satisfy

\begin{equation} \label{coeff identity}
    c(n_1,n_2)=\frac{1}{\zeta_q(2)} \cdot b_{1,1}^{Sp}(n_1,n_2),
\end{equation}
since $\eta_2=\zeta_q(2)^{-1}$ is the relevant arithmetic factor for the second moment. In this case, the random matrix theory coefficient $ b_{1,1}^{Sp}(n_1,n_2)$ has the following explicit expression from \cite[Theorem 2.2]{AB24}:

\begin{align*}
    b_{1,1}^{Sp}(n_1,n_2) & =\frac{(-1)^{n_1+n_2}}{2^{n_1+n_2+3}} (n_1!) (n_2!) \sum_{2l_1+2l_2 \leq n_1} \sum_{2m_1+2m_2 \leq n_2} \frac{1}{(n_1-2l_1-2l_2)!} \frac{1}{(n_2-2m_1-2m_2)!} \nonumber \\
    &\qquad \times \frac{(2l_2+2m_2-2l_1-2m_1-2)}{(2l_1+2m_1+3)! (2l_2+2m_2+1)!},
\end{align*}
where the sum is over non-negative integers $l_1, l_2$ and $m_1, m_2$. We do not attempt to prove that (\ref{coeff identity}) holds for all $n_1,n_2$ here but we have checked numerically that it does indeed hold for $n_1,n_2 \leq 20$. 

\section{Background on $L$-functions over function fields}

Here we recall the necessary background on quadratic Dirichlet $L$-functions over function fields. We use \cite{Ros02} as a general reference.

For a polynomial $f \in \mathbb{F}_q[t]$, the norm of $f$ is defined to be $q^{d(f)}$ if $f \neq 0$ and $|f|=0$ for $f=0$.
The zeta function $\zeta_q(s)$ of $\mathbb{F}_q[t]$ is defined for $\mathrm{Re}(s)>1$ by the Dirichlet series and Euler product

\begin{equation*}
    \zeta_q(s)=\sum_{f \in \mathcal{M}} \frac{1}{|f|^s}=\prod_{P \in \mathcal{P}} \left( 1-\frac{1}{|P|^s} \right)^{-1}.
\end{equation*}
As there are $q^n$ monic polynomials of degree $n$, we have that

\begin{equation*}
    \zeta_q(s)=\frac{1}{1-q^{1-s}}.
\end{equation*}
For an integer $n \geq 1$, the Prime Polynomial Theorem states that

\begin{equation*}
    |\mathcal{P}_n|=\frac{q^n}{n}+O \left( \frac{q^{n/2}}{n} \right).
\end{equation*}

Given a monic, irreducible polynomial $P \in \P$, we define the quadratic character $\chi_P$ using the Legendre symbol

\begin{equation*}
    \chi_P(f)=\left( \frac{f}{P} \right).
\end{equation*}
That is,

\begin{align*}
    \chi_P(f)=\begin{cases} 0, & \textup{ if } P|f, \\ 1, & \textup{ if } P \nmid f \textup{ and } f \textup{ is a  square modulo } P, \\ -1, & \textup{ if } P \nmid f \textup{ and } f \textup{ is not a  square modulo } P. \end{cases}
\end{align*}
The quadratic Dirichlet $L$-function attached to the character $\chi_P$ is defined for $\textup{Re}(s)>1$ by 

\begin{equation*}
    L(s,\chi_P)=\sum_{f \in \mathcal{M}} \frac{\chi_P(f)}{|P|^s}=\prod_{Q \in \P} \left( 1-\frac{\chi_P(Q)}{|Q|^s} \right)^{-1}.
\end{equation*}
With the change of variables $u=q^{-s}$, we may write

\begin{equation*}
    \mathcal{L}(u,\chi_P):=L(s,\chi_P)=\sum_{f \in \mathcal{M}} \chi_P(f) u^{d(f)}=\prod_{Q \in \P} \left( 1-\chi_P(Q) u^{d(Q)} \right)^{-1}.
\end{equation*}
We have that $\mathcal{L}(u,\chi_P)$ is in fact a polynomial in $u$ of degree $2g$ and satisfies the functional equation

\begin{equation*}
    \mathcal{L}(u,\chi_P)=(qu^2)^g \mathcal{L} \left( \frac{1}{qu},\chi_P \right).
\end{equation*}
By the Riemann Hypothesis for curves over finite fields, proven by Weil \cite{Weil48}, all of the zeros of $\mathcal{L}(u,\chi_P)$ lie on the circle $|u|=q^{-1/2}$.
 
Now, we denote the divisor function on $\mathbb{F}_q[t]$ by $\tau(f)$ which satisfies

\begin{equation*}
    \sum_{f \in \mathcal{M}_n} \tau(f)=(n+1) q^n,
\end{equation*}
and for $\alpha,\beta \in \mathbb{C}$, we let

\begin{equation*}
    \tau_{\alpha,\beta}(f)=\sum_{f=f_1 f_2} \frac{1}{|f_1|^{\alpha} |f_2|^{\beta}}.
\end{equation*}
Also, for integers $m,n \geq 0$ we will denote

\begin{equation*}
    \tau^{(m,n)}(f):=\frac{\partial^{m+n}}{\partial\alpha^m \partial\beta^n} \tau_{\alpha,\beta}(f) |_{\alpha=\beta=0}=(-\log q)^{m+n} \sum_{f=f_1 f_2} d(f_1)^m d(f_2)^n,
\end{equation*}
and note that we have the bound

\begin{equation*}
    |\tau^{(m,n)}(f)| \ll \sum_{f=f_1 f_2} d(f)^{m+n} \ll \tau(f) d(f)^{m+n}.
\end{equation*}

Lastly, we have the following Weil bound for character sums over monic, irreducible polynomials.

\begin{lemma} \label{Weil bound}
    For $f \in \mathcal{M}$ not a square, as $g \to \infty$,

    \begin{equation*}
        \frac{1}{|\P|} \sum_{P \in \P} \chi_P(f) \ll q^{-g} d(f).
    \end{equation*}
\end{lemma}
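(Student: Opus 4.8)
The plan is to convert the average of the Legendre symbol $\chi_P(f)=\left(\frac{f}{P}\right)$ into an average of a \emph{fixed} Dirichlet character evaluated at the varying prime $P$, and then to bound the resulting prime sum using the Riemann Hypothesis for curves. First I would apply quadratic reciprocity in $\mathbb{F}_q[t]$. For a monic $f$ and a monic irreducible $P$ of degree $2g+1$ with $P\nmid f$, reciprocity gives
\[
\chi_P(f)=\left(\frac{f}{P}\right)=(-1)^{\frac{q-1}{2}d(f)(2g+1)}\left(\frac{P}{f}\right)=\epsilon\,\chi_f(P),
\]
where $\epsilon=\pm1$ depends only on $q$, $d(f)$ and $g$, hence is independent of $P$, and $\chi_f:=\left(\frac{\cdot}{f}\right)$ is a Dirichlet character modulo $f$. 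The terms with $P\mid f$ contribute nothing since there $\chi_P(f)=0$, so $\sum_{P\in\P}\chi_P(f)=\epsilon\sum_{P\in\P}\chi_f(P)$ and it suffices to bound the latter.

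Next I would verify non-principality. Writing $f=f_1f_2^2$ with $f_1$ square-free, the hypothesis that $f$ is not a square means $f_1$ is non-constant; on residues coprime to $f$ the character $\chi_f$ agrees with $\left(\frac{\cdot}{f_1}\right)$, and by the Chinese Remainder Theorem there is a residue on which the latter equals $-1$, so $\chi_f$ is non-principal. Consequently $L(u,\chi_f)=\sum_{h\in\mathcal{M}}\chi_f(h)u^{d(h)}$ is a polynomial of degree at most $d(f)-1$, and by the Riemann Hypothesis for curves (Weil \cite{Weil48}) its inverse roots $\alpha_j$ satisfy $|\alpha_j|\leq q^{1/2}$.

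The analytic estimate then comes from the explicit formula. Taking the logarithmic derivative of $L(u,\chi_f)=\prod_j(1-\alpha_j u)$ and comparing coefficients of $u^n$ yields, for $\psi_n(\chi_f):=\sum_{h\in\mathcal{M}_n}\Lambda(h)\chi_f(h)$,
\[
\psi_n(\chi_f)=-\sum_j\alpha_j^n,\qquad |\psi_n(\chi_f)|\leq (d(f)-1)\,q^{n/2}.
\]
Isolating the contribution of the primes $P$ themselves (where $\Lambda(P)=d(P)=n$) and bounding the prime-power terms $P^k$ with $k\geq2$ by $O(q^{n/2})$ exactly as in the classical setting, I obtain with $n=2g+1$ that
\[
\Bigl|\sum_{P\in\P}\chi_f(P)\Bigr|\ll \frac{d(f)}{2g+1}\,q^{(2g+1)/2}.
\]
Dividing by $|\P|\sim q^{2g+1}/(2g+1)$ from the Prime Polynomial Theorem gives
\[
\frac{1}{|\P|}\Bigl|\sum_{P\in\P}\chi_P(f)\Bigr|\ll d(f)\,q^{-(2g+1)/2}\leq q^{-g}d(f),
\]
which is the asserted bound.

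The genuinely delicate points are confined to the second and third steps: checking that $\chi_f$ is non-principal \emph{precisely} when $f$ is not a square, so that $L(u,\chi_f)$ is a polynomial and Weil's theorem applies, and controlling both the degree of $L(u,\chi_f)$ by $d(f)$ and the prime-power contribution to $\psi_n$. Once these are in place, the Weil bound $|\alpha_j|\leq q^{1/2}$ supplies all of the cancellation, and the reciprocity step is what makes it usable by turning $\chi_P(f)$ into a fixed character in $P$.
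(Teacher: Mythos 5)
Your argument is correct and is, in substance, the same as the paper's: the paper simply cites Rudnick's inequality $\bigl|\sum_{P\in\P}\chi_P(f)\bigr|\ll \frac{d(f)}{2g+1}q^{(2g+1)/2}$ (equation (2.5) of \cite{Rud10}) together with the Prime Polynomial Theorem, and that cited inequality is proved exactly by your route --- reduction to a fixed non-principal character modulo $f$, the explicit formula for $\psi_n(\chi_f)=-\sum_j\alpha_j^n$ with at most $d(f)-1$ inverse roots, and Weil's bound $|\alpha_j|\leq q^{1/2}$. The only piece you make explicit that the citation hides is the quadratic reciprocity step identifying $\left(\frac{f}{\cdot}\right)$ with $\pm\left(\frac{\cdot}{f}\right)$, and your handling of it (sign independent of $P$, non-principality from $f\neq\square$) is correct.
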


\begin{proof}
This follows from equation (2.5) in \cite{Rud10} and the Prime Polynomial Theorem.
\end{proof}

\section{The twisted first moment of the $k$-th derivative}

Here we will prove Theorem \ref{twisted first moment} following the approach used in \cite{Jung22}. Let $l \in \mathcal{M}$ and write $l=l_1 l_2^2$ with $l_1, l_2 \in \mathcal{M}$ and $l_1$ square-free. For $h \in \{g,g-1\}$, we define the sum

\begin{align*}
    S_h (m;l) & :=\sum_{f \in \mathcal{M}_{\leq h}} \frac{d(f)^m}{|f|^{1/2}} \frac{1}{|\mathcal{P}_{2g+1}|} \sum_{P\in\mathcal{P}_{2g+1}} \chi_P(fl) \\
    & =\sum_{n=0}^h n^m q^{-n/2} \sum_{f\in\mathcal{M}_n} \frac{1}{|\mathcal{P}_{2g+1}|} \sum_{P\in\mathcal{P}_{2g+1}} \chi_P(fl),
\end{align*}
and compute an asymptotic formula for $S_h (m;l)$ in the following lemma.

\begin{lemma} \label{S asymptotic} 
For an integer $m \geq 0$ and $h \in \{g,g-1\}$, we have that as $g \to \infty$,

    \begin{equation*}
        S_h (m;l)=\frac{1}{|l_1|^{1/2}} \sum_{i=0}^m \binom{m}{i} 2^i d(l_1)^{m-i} J_i([\tfrac{h-d(l_1)}{2}])+O \left( q^{-g/2} g^{m+1} d(l) \right).
    \end{equation*}
\end{lemma}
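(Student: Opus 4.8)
The plan is to separate the terms in $S_h(m;l)$ according to whether the character argument $fl$ is a perfect square, since the inner average $|\P|^{-1}\sum_{P\in\P}\chi_P(fl)$ behaves completely differently in the two cases. Writing $l = l_1 l_2^2$ with $l_1$ square-free and using multiplicativity of $\chi_P$, one has $\chi_P(fl) = \chi_P(fl_1)\chi_P(l_2)^2$, and since $d(P)=2g+1$ exceeds $d(l_2)$ for large $g$, the factor $\chi_P(l_2)^2$ equals $1$; thus $\chi_P(fl) = \chi_P(fl_1)$ for every $P\in\P$. Because $l_1$ is square-free, $fl_1$ is a perfect square precisely when $f = l_1 k^2$ for some monic $k$, and I would split $S_h(m;l)$ into this diagonal contribution plus an off-diagonal remainder over all other $f$.

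For the diagonal terms $f = l_1 k^2$, the argument $fl = (l_1 l_2 k)^2$ is a square, so $\chi_P(fl)\in\{0,1\}$ and vanishes only when $P$ divides $l_1 l_2 k$; as $d(P)=2g+1$ while $2\,d(l_1 l_2 k) = d(fl) = d(f)+d(l) \le h + d(l)$, no such $P$ occurs once $g$ is large, and the inner average is exactly $1$. Using $d(f) = d(l_1) + 2d(k)$ and $|f|^{1/2} = |l_1|^{1/2} q^{d(k)}$ together with $|\mathcal{M}_j| = q^j$, the factors $q^{d(k)}$ cancel against the count $q^j$ of monic $k$ of degree $j$, and the diagonal contribution collapses to
\begin{equation*}
\frac{1}{|l_1|^{1/2}} \sum_{j=0}^{N} (d(l_1) + 2j)^m, \qquad N := \left[\tfrac{h - d(l_1)}{2}\right].
\end{equation*}
Expanding $(d(l_1)+2j)^m$ by the binomial theorem and collecting the inner power sums $\sum_{j=1}^N j^i = J_i(N)$ then yields the claimed main term $|l_1|^{-1/2}\sum_{i=0}^m \binom{m}{i} 2^i d(l_1)^{m-i} J_i(N)$; here one must keep careful track of the endpoint $j=0$ when matching $\sum_{j=0}^N j^i$ to the definition $J_i(n) = \sum_{r=1}^n r^i$.

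For the off-diagonal terms, where $fl$ is not a square, I would invoke the Weil bound of Lemma \ref{Weil bound} to get $|\P|^{-1}\sum_{P\in\P}\chi_P(fl) \ll q^{-g} d(fl)$. Inserting this and using $d(fl) = d(f) + d(l)$ bounds the remainder by
\begin{equation*}
q^{-g} \sum_{n=0}^{h} n^m (n + d(l)) \sum_{f \in \mathcal{M}_n} |f|^{-1/2} = q^{-g} \sum_{n=0}^h n^m (n+d(l)) q^{n/2}.
\end{equation*}
Since the geometric-type sum $\sum_{n\le h} n^{m+1} q^{n/2}$ is dominated by its top term at $n = h \le g$, this is $O\!\left(q^{-g/2} g^{m+1} d(l)\right)$, matching the stated error.

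The part requiring the most care is the off-diagonal estimate, specifically the \emph{uniformity in $l$}: the Weil bound contributes a factor $d(fl)$ that grows with $l$, and one must check that summing it against $d(f)^m/|f|^{1/2}$ over all of $\mathcal{M}_{\le h}$ still produces an error of the advertised shape $q^{-g/2} g^{m+1} d(l)$ rather than something larger. The remaining steps — the square-classification of $fl$, the exact evaluation of the diagonal average as $1$, and the binomial rearrangement — are essentially bookkeeping, with the only subtlety being the correct handling of the $j=0$ boundary term in the power-sum identification.
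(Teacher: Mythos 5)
Your proposal is correct and follows essentially the same route as the paper: split according to whether $fl$ is a square, bound the non-square part with the Weil bound of Lemma \ref{Weil bound}, parametrise the square terms as $f=l_1f_1^2$, and finish with a binomial expansion into the power sums $J_i$. The one point where you diverge is the claim that the inner average over the diagonal terms is \emph{exactly} $1$: this requires that no $P\in\P$ divide $l$, which holds only when $d(l)<2g+1$, whereas the paper instead writes $\sum_{P\in\P}\chi_P(fl)=|\P|+O(d(l))$ to account for primes dividing $l$ and absorbs the resulting $O(q^{-3g/2}g^{m+1}d(l))$ into the error term, giving uniformity in $l$ that your version would need one extra line to recover.
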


\begin{proof}
We split the sum $S_h (m;l)=S_h (m;l)_{\square}+S_h (m;l)_{\neq\square}$ according to whether $fl=\square$ or $fl\neq\square$. For the contribution of non-squares, we use Lemma \ref{Weil bound} to obtain

\begin{align*}
    |S_h (m;l)_{\neq\square}| & \ll \sum_{n=0}^g n^m q^{-n/2} \sum_{\substack{f\in\mathcal{M}_n \\ fl\neq\square}} \left| \frac{1}{|\P|} \sum_{P \in \P} \chi_P(fl) \right| \\
    & \ll q^{-g} \sum_{n=0}^g n^m q^{-n/2} \sum_{f\in\mathcal{M}_n} d(fl) \\
    & \ll q^{-g} (g+d(l)) \sum_{n=0}^g n^m q^{n/2} \\
    & \ll q^{-g/2} g^m (g+d(l)).
\end{align*}

For the contribution of the squares in $S_h (m;l)_{\square}$, we use the facts that $\chi_P(f^2)=\chi_P(f)^2$ and since $d(f) \leq g$, we have that $P \nmid f$ for all $P \in \P$. Thus, for $fl=\square$, we have

\begin{equation*}
    \sum_{P \in \P} \chi_P(fl)=\sum_{P \in \P} 1-\sum_{\substack{P \in \P \\ P|l}} 1=|\P|+O(d(l)).
\end{equation*}
Now, as $fl=\square$, we write $f=l_1 f_1^2$ with $f_2$ monic. Then, since $d(f)=d(l_1)+2d(f_1) \leq h$, we can rewrite the sum over $f_1$ with $d(f_1) \leq (h-d(l_1))/2$ which gives us that

\begin{align*}
    S_h (m;l)_{\square} & =\sum_{n=0}^h n^m q^{-n/2} \sum_{\substack{f\in\mathcal{M}_n \\ fl=\square}} \frac{1}{|\mathcal{P}_{2g+1}|} \sum_{P \in \P} \chi_P(fl) \\
    & =\sum_{n=0}^h n^m q^{-n/2} \sum_{\substack{f \in \mathcal{M}_n \\ fl=\square}} 1+O\left( \frac{d(l)}{|\P|} \sum_{n=0}^h n^m q^{-n/2} \sum_{\substack{f\in\mathcal{M}_n \\ fl=\square}} 1 \right) \\
    & =q^{-d(l_1)/2} \sum_{n=0}^{(h-d(l_1))/2} (d(l_1)+2n)^m q^{-n} \sum_{f_1 \in \mathcal{M}_n} 1+O\left( q^{-2g} g \, d(l) \sum_{n=0}^h n^m q^{n/2} \right) \\
    & =\frac{1}{|l_1|^{1/2}} \sum_{n=0}^{(h-d(l_1))/2} (d(l_1)+2n)^m+O\left( q^{-3g/2} g^{m+1} d(l) \right),
\end{align*}
where we have used the Prime Polynomial Theorem in bounding the error. For the main term, we use a binomial expansion and the function $J_i(n)=\sum_{m=0}^n m^i$ to write it as

\begin{align*}
    \frac{1}{|l_1|^{1/2}} \sum_{n=0}^{(h-d(l_1))/2} (2n+d(l_1))^m & =\frac{1}{|l_1|^{1/2}} \sum_{n=0}^{(h-d(l_1))/2} \sum_{i=0}^m \binom{m}{i} (2n)^i d(l_1)^{m-i} \\
    & =\frac{1}{|l_1|^{1/2}} \sum_{i=0}^m \binom{m}{i} 2^i d(l_1)^{m-i} \sum_{n=0}^{(h-d(l_1))/2} n^i \\
    & =\frac{1}{|l_1|^{1/2}} \sum_{i=0}^m \binom{m}{i} 2^i d(l_1)^{m-i} J_i([\tfrac{h-d(l_1)}{2}]).
\end{align*}
Combining this with the bounds for the error terms completes the proof.
\end{proof}

\begin{proof}[Proof of Theorem \ref{twisted first moment}]

Using the expression for $L^{(k)}(\tfrac{1}{2},\chi_P)$ given in \cite[Lemma 5.1]{AJ21} and multiplying by $\chi_P(l)$, we have that

\begin{equation*}
    \frac{L^{(k)}(\tfrac{1}{2},\chi_P) \chi_P(l)}{(\log q)^k}=(-1)^k \sum_{n=0}^g n^k q^{-n/2} \sum_{f\in\mathcal{M}_n} \chi_P(fl)+\sum_{m=0}^k \binom{k}{m} (-2g)^{k-m} \sum_{n=0}^{g-1} n^m q^{-n/2} \sum_{f\in\mathcal{M}_n} \chi_P(fl).
\end{equation*}
Taking the average over $\mathcal{P}_{2g+1}$, we can then write

\begin{equation*}
    \frac{1}{|\P|} \sum_{P \in \P} \frac{L^{(k)}(\tfrac{1}{2},\chi_P) \chi_P(l)}{(\log q)^k}=(-1)^k S_g (k;l)+\sum_{m=0}^k \binom{k}{m} (-2g)^{k-m} S_{g-1} (m;l).
\end{equation*}
Using this expression for the twisted moment and applying the formula for $S_h (m;l)$ in Lemma \ref{S asymptotic} completes the proof.
    
\end{proof}

\section{Joint moments of the $\mu$-th and $\nu$-th derivatives}

In this section will prove Theorem \ref{joint moment thm}. First, for integers $m,n \geq 0$ and $h \in \{2g,2g-1\}$, let

\begin{equation*}
    T_h(m,n)=\sum_{f\in\mathcal{M}_{\leq h}} \frac{\tau^{(m,n)}(f)}{|f|^{1/2}} \frac{1}{|\P|} \sum_{P\in\P} \chi_P(f).
\end{equation*}
The following lemma establishes the asymptotic behaviour of the sums $T_h(m,n)$ as $g \to \infty$. 

\begin{lemma} \label{T asymptotic}
    For integers $m,n \geq 0$ and $h \in \{2g, 2g-1\}$, we have that as $g \to \infty$,

    \begin{equation}
        T_h(m,n)=\frac{(-\log q)^{m+n} A(m,n)}{\zeta_q(2)} g^{m+n+3}+O(g^{m+n+2}),
    \end{equation}
    with $A(m,n)$ as defined in (\ref{A(m,n) def}).
\end{lemma}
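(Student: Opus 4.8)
My plan is to follow the same template as the proof of Lemma~\ref{S asymptotic}: write $T_h(m,n)=T_h(m,n)_{\square}+T_h(m,n)_{\neq\square}$ according to whether $f$ is a perfect square, bound the non-square part with the Weil bound, and evaluate the square part precisely. For the non-squares I would apply Lemma~\ref{Weil bound} together with the bound $|\tau^{(m,n)}(f)|\ll\tau(f)\,d(f)^{m+n}$ and the identity $\sum_{f\in\mathcal{M}_k}\tau(f)=(k+1)q^k$, which gives
\[
|T_h(m,n)_{\neq\square}|\ll q^{-g}\sum_{k=0}^{h}k^{m+n+1}(k+1)q^{k/2}\ll q^{-g}h^{m+n+2}q^{h/2}\ll g^{m+n+2},
\]
since $h\le 2g$, so this contribution is absorbed into the error term.

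For the squares, write $f=\ell^2$ with $2d(\ell)\le h\le 2g$. Every $P\in\mathcal{P}_{2g+1}$ has $d(P)=2g+1>d(\ell)$, so $P\nmid\ell$ and hence $\chi_P(f)=\chi_P(\ell)^2=1$; the inner average is therefore exactly $1$. Thus, with $D=d(\ell)$ and $r=d(f_1)$,
\[
T_h(m,n)_{\square}=(-\log q)^{m+n}\sum_{D\ge 0,\ 2D\le h}q^{-D}\sum_{r=0}^{2D}N_D(r)\,r^m(2D-r)^n,
\]
where $N_D(r)=\sum_{\ell\in\mathcal{M}_D}\#\{f_1\mid\ell^2:\ d(f_1)=r\}$, and the task reduces to understanding $N_D(r)$. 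I would compute its bivariate generating series by an Euler product,
\[
\sum_{D,r}N_D(r)x^r y^D=\prod_P\frac{1+(xy)^{d(P)}}{\big(1-(x^2y)^{d(P)}\big)\big(1-y^{d(P)}\big)}=\frac{1-qx^2y^2}{(1-qxy)(1-qx^2y)(1-qy)},
\]
and then take partial fractions in $y$ (poles at $y=\tfrac1q,\tfrac1{qx},\tfrac1{qx^2}$). For $0\le r\le 2D$ the pole at $y=1/(qx^2)$ contributes nothing, leaving $N_D(r)/q^D=[x^r]W(x)+[x^{r-D}]U(x)$ with $W(x)=\frac{1-x^2/q}{(1-x)^2(1+x)}$ and $U(x)=\frac{-(1-q^{-1})x}{(1-x)^2}$. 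Reading off the coefficients from the double pole at $x=1$ yields the uniform \emph{triangular} density
\[
N_D(r)=(1-q^{-1})\,q^D\,D\,\omega(r/D)+O(q^D),\qquad \omega(\theta)=\tfrac{\theta}{2}\ (0\le\theta\le1),\quad \omega(\theta)=1-\tfrac{\theta}{2}\ (1\le\theta\le2).
\]

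Feeding this into the sum over $r$ and passing to a Riemann integral via $r=D\theta$, I expect
\[
\sum_{r=0}^{2D}N_D(r)\,r^m(2D-r)^n=(1-q^{-1})q^D D^{m+n+2}\int_0^2\omega(\theta)\,\theta^m(2-\theta)^n\,d\theta+O\big(q^D D^{m+n+1}\big),
\]
and a short computation (split at $\theta=1$, then send $\theta\mapsto 2-\theta$ on $[1,2]$) shows the integral equals $(m+n+3)A(m,n)$. Multiplying by $q^{-D}$ cancels $q^D$, and since $\sum_{D\le h/2}D^{m+n+2}\sim\frac{(h/2)^{m+n+3}}{m+n+3}\sim\frac{g^{m+n+3}}{m+n+3}$ for $h\in\{2g,2g-1\}$, I obtain $T_h(m,n)_{\square}=(-\log q)^{m+n}(1-q^{-1})A(m,n)\,g^{m+n+3}+O(g^{m+n+2})$, which is the claim because $1-q^{-1}=\zeta_q(2)^{-1}$.

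The main obstacle is the uniform asymptotics of $N_D(r)$ across the whole range $0\le r\le 2D$ with absolute error $O(q^D)$: because the three poles in $y$ all collide at $y=1/q$ as $x\to1$, the individual partial-fraction pieces are singular, and one must verify that their singular parts cancel while also controlling the secondary pole at $x=-1$ and the Euler--Maclaurin discretisation error uniformly in $r$. The point to check is that every error term, once multiplied by $r^m(2D-r)^n\ll D^{m+n}$, summed over the $O(D)$ values of $r$, and then summed over $D\le h/2$ against $q^{-D}$, contributes only $O(g^{m+n+2})$; tracking this uniformly is the delicate part, while the remaining steps are routine.
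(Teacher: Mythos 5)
Your proposal is correct and follows the paper's skeleton exactly in its outer layers: the same square/non-square decomposition, the same Weil-bound estimate $O(g^{m+n+2})$ for the non-square part, the same observation that the character average equals $1$ on squares, and the same final Riemann-sum-plus-Faulhaber step producing $(m+n+3)A(m,n)\cdot\sum_D D^{m+n+2}$. Where you genuinely diverge is in the central counting problem. The paper evaluates $N_D(r)=\sum_{f\in\mathcal{M}_D}\#\{f_1\mid f^2: d(f_1)=r\}$ by the elementary parametrisation $f_1=l_1l_2^2$, $f_2=l_1l_3^2$ with $l_1$ square-free, summing freely over $l_2,l_3$ and using $|\mathcal{H}_n|=q^n(1-q^{-1})$; this yields the \emph{exact} closed form $N_D(r)=q^D\bigl(1+(1-q^{-1})[r/2]\bigr)$ for $r\le D$ (and its mirror image for $r>D$), from which your triangular density $\omega$ and the uniform $O(q^D)$ error are immediate. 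You instead derive the bivariate Euler product and do partial fractions in $y$; I checked that your local factor, the closed form $\frac{1-qx^2y^2}{(1-qxy)(1-qx^2y)(1-qy)}$, the vanishing of the $y=1/(qx^2)$ pole's contribution for $r\le 2D$ (since $C(x)=\frac{x(x^2-1/q)}{(1-x)^2(1+x)}$ has no constant term), and the resulting $W$, $U$ all reproduce the paper's exact count, so the two routes agree term by term. The uniformity issue you flag as the "delicate part" is therefore not actually delicate: the coefficients $[x^r]W(x)$ and $[x^{r-D}]U(x)$ are explicitly piecewise linear in $r$ up to a bounded $(-1)^r$ oscillation coming from the pole at $x=-1$, so the error in $N_D(r)$ is a genuinely uniform $O(q^D)$ with no pole-collision subtlety — the collision you worry about happens in the $x$-variable only if one tries to take $x\to1$ inside the partial fractions, which the coefficient extraction never requires. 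The paper's elementary parametrisation buys you this uniformity for free; your generating-function route buys a more systematic derivation that would generalise to other multiplicative weights, at the cost of having to make the coefficient asymptotics explicit. Either way the proof closes, so the only revision I would ask for is to replace the heuristic $N_D(r)=(1-q^{-1})q^DD\,\omega(r/D)+O(q^D)$ by the exact formula your own partial fractions already give you.
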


\begin{proof}
We write

\begin{equation*}
     T_h(m,n)=T_h(m,n)_{\square}+T_h(m,n)_{\neq \square},
\end{equation*}
where $T_h(m,n)_{\square}$ and $T_h(m,n)_{\neq \square}$ denote the sums over $f$ a perfect square and $f$ not a square, respectively. For the terms with $f \neq \square$, we bound the sum using Lemma \ref{Weil bound} to obtain

\begin{align*}
    |T_h(m,n)_{\neq \square}| & \ll \sum_{\substack{f\in\mathcal{M}_{\leq 2g}\\ f\neq\square}} \frac{\tau^{(m,n)}(f)}{|f|^{1/2}} \left| \frac{1}{|\P|} \sum_{P\in\P} \chi_P(f) \right| \\ 
    & \ll q^{-g} \sum_{f\in\mathcal{M}_{\leq 2g}} \frac{\tau^{(m,n)}(f)}{|f|^{1/2}} d(f) \\
    & \ll g q^{-g} \sum_{j=0}^{2g} q^{-j/2} \sum_{f\in\mathcal{M}_j} \tau(f) d(f)^{m+n} \\
    & \ll g q^{-g} \sum_{j=0}^{2g} j^{m+n+1} q^{j/2} \\
    & \ll g^{m+n+2},
\end{align*}
where we have used the fact that $\sum_{f\in\mathcal{M}_j} \tau(f)\ll j q^j$. For the terms in $T_h(m,n)_{\square}$ with $f=\square$, we have

\begin{equation*}
    \frac{1}{|\P|} \sum_{P\in\P} \chi_P(f)=1,
\end{equation*}
since $d(f) \leq 2g$ and so $P \nmid f$ for all $P \in \P$. Thus, we have that

\begin{equation*}
    T_h(m,n)_{\square}=\sum_{\substack{f\in\mathcal{M}_{\leq h}\\ f=\square}} \frac{\tau^{(m,n)}(f)}{|f|^{1/2}}=\sum_{f\in\mathcal{M}_{\leq h/2}} \frac{\tau^{(m,n)}(f^2)}{|f|}.
\end{equation*}
We next write the main term $T_h(m,n)_{\square}$ as

\begin{equation*}
    \sum_{f\in\mathcal{M}_{\leq h/2}} \frac{\tau^{(m,n)}(f^2)}{|f|}=(-\log q)^{m+n} \sum_{j=0}^{h/2} q^{-j} \sum_{f \in \mathcal{M}_j} \sum_{f^2=f_1 f_2} d(f_1)^m d(f_2)^n.
\end{equation*}
Then, for a given $j$, we use the hyperbola method and write

\begin{align} \label{hyperbola}
    \sum_{f \in \mathcal{M}_j} \sum_{f^2=f_1 f_2} d(f_1)^m d(f_2)^n & =\sum_{f \in \mathcal{M}_j} \sum_{\substack{f^2=f_1 f_2 \\ d(f_1)=j}} j^{m+n}+\sum_{f \in \mathcal{M}_j} \sum_{\substack{f^2=f_1 f_2 \\ d(f_1)<j}} d(f_1)^m d(f_2)^n \nonumber \\
    &\qquad +\sum_{f \in \mathcal{M}_j} \sum_{\substack{f^2=f_1 f_2 \\ d(f_2)<j}} d(f_1)^m d(f_2)^n.
\end{align}
The second and third sums here are similar so we need only focus on the first two. In particular, it suffices to evaluate the sum

\begin{equation} \label{sum 1}
    \sum_{f \in \mathcal{M}_j} \sum_{\substack{f^2=f_1 f_2 \\ d(f_1)=k}} d(f_1)^m d(f_2)^n=k^m (2j-k)^n \sum_{f \in \mathcal{M}_j} \sum_{\substack{f^2=f_1 f_2 \\ d(f_1)=k}} 1,
\end{equation}
for $k \leq j$. To compute the above sum, we observe that $f_1 f_2=\square$ if and only if $f_1=l_1 l_2^2$ and $f_2=l_1 l_3^2$ with $l_1, l_2, l_3 \in \mathcal{M}$ and $l_1$ square-free. Also, since $d(f_1)=k$, we must have $d(l_1) \leq k$ and $d(l_2)=(k-d(l_1))/2$. Then, as $d(f^2)=d(f_1)+d(f_2)=2j$, we have that $d(l_3)=j-k/2-d(l_1)/2 \geq 0$. So, by summing over $l_1, l_2$ and $l_3$, we have that

\begin{align*}
    \sum_{f \in \mathcal{M}_j} \sum_{\substack{f^2=f_1 f_2 \\ d(f_1)=k}} 1 & =\sum_{\substack{l_1 \in \mathcal{H} \\ d(l_1) \leq k \\ k-d(l_1) \ \textup{even}}} \sum_{\substack{l_2 \in \mathcal{M} \\ d(l_2)=(k-d(l_1))/2}} \sum_{\substack{l_3 \in \mathcal{M} \\ d(l_3)=j-k/2-d(l_1)/2}} 1 \\
    & =q^{j-k/2} \sum_{\substack{l_1 \in \mathcal{H} \\ d(l_1) \leq k \\ k-d(l_1) \ \textup{even}}} q^{-d(l_1)/2} \sum_{\substack{l_2 \in \mathcal{M} \\ d(l_2)=(k-d(l_1))/2}} 1 \\
    & =q^j \sum_{\substack{l_1 \in \mathcal{H} \\ d(l_1) \leq k \\ k-d(l_1) \ \textrm{even}}} q^{-d(l_1)}.
\end{align*}
For the final sum over $l_1$, if $k$ is even, then

\begin{equation*}
    \sum_{\substack{l_1 \in \mathcal{H} \\ d(l_1) \leq k \\ k-d(l_1) \ \textup{even}}} q^{-d(l_1)}=\sum_{\substack{i=0 \\ i \ \textup{even}}}^k \sum_{l_1 \in \mathcal{H}_i} q^{-i}=1+\sum_{i=1}^{k/2} \sum_{l_1 \in \mathcal{H}_{2i}} q^{-2i}=1+(1-q^{-1}) \frac{k}{2},
\end{equation*}
where we have used the fact that for $n \geq 1$,

\begin{equation*}
    |\mathcal{H}_n|=\frac{q^n}{\zeta_q(2)}=q^n (1-q^{-1}).
\end{equation*}
Similarly, if $k$ is odd, we have

\begin{equation*}
    \sum_{\substack{l_1 \in \mathcal{H} \\ d(l_1) \leq k \\ k-d(l_1) \ \textup{even}}} q^{-d(l_1)}=1+(1-q^{-1}) \frac{k-1}{2}.
\end{equation*}
Thus, we have that

\begin{equation*}
    \sum_{f \in \mathcal{M}_j} \sum_{\substack{f^2=f_1 f_2 \\ d(f_1)=k}} 1=q^j \left( 1+(1-q^{-1}) [k/2] \right),
\end{equation*}
and incorporating this into (\ref{hyperbola}) and (\ref{sum 1}) gives us

\begin{align} \label{sum 2}
    & \sum_{f \in \mathcal{M}_j} \sum_{f^2=f_1 f_2} d(f_1)^m d(f_2)^n \nonumber \\
    & =q^j \left( j^{m+n} \left( 1+(1-q^{-1}) [j/2] \right)+\sum_{k=0}^{j-1} \left( k^m (2j-k)^n+k^n (2j-k)^m \right) \left( 1+(1-q^{-1}) [k/2] \right) \right).
\end{align}

We now approximate the expression in (\ref{sum 2}) as $j \to \infty$. As

\begin{equation*}
    \left( 1+(1-q^{-1}) [k/2] \right)=(1-q^{-1}) \frac{k}{2}+O(1),
\end{equation*}
the first term in the brackets in (\ref{sum 2}) is

\begin{equation*}
    (1-q^{-1}) \frac{j^{m+n+1}}{2}+O(j^{m+n}).
\end{equation*}
Next, we have that

\begin{equation} \label{sum 3}
    \sum_{k=0}^{j-1} k^m (2j-k)^n \left( 1+(1-q^{-1}) [k/2] \right)=\frac{(1-q^{-1})}{2} \sum_{k=0}^{j-1} k^{m+1} (2j-k)^n+O \left( \sum_{k=0}^{j-1} k^m (2j-k)^n \right),
\end{equation}
so we now focus on the sum $\sum_{k=0}^{j-1} k^m (2j-k)^n$. On the one hand, by writing it in terms of a Riemann sum, we have

\begin{equation*}
    \sum_{k=0}^{j-1} k^m (2j-k)^n=j^{m+n+1} \cdot \frac{1}{j} \sum_{k=0}^{j-1} \left( \frac{k}{j} \right)^m \left( 2-\frac{k}{j} \right)^n \sim j^{m+n+1} \int_0^1 x^m (2-x)^n dx,
\end{equation*}
as $j \to \infty$. On the other hand, by Faulhaber's formula, we know that the sum is a polynomial in $j$. Thus, we have that

\begin{equation} \label{sum 4}
    \sum_{k=0}^{j-1} k^m (2j-k)^n=j^{m+n+1} \int_0^1 x^m (2-x)^n dx+O(j^{m+n}),
\end{equation}
and then using (\ref{sum 3}) and (\ref{sum 4}) in (\ref{sum 2}) yields

\begin{equation*}
    \sum_{f \in \mathcal{M}_j} \sum_{f^2=f_1 f_2} d(f_1)^m d(f_2)^n=\frac{q^j j^{m+n+2}}{2 \zeta_q(2)} \int_0^1 \left( x^{m+1} (2-x)^n+x^{n+1} (2-x)^m \right) dx+O(q^j j^{m+n+1}).
\end{equation*}
Therefore, by Faulhaber's formula,

\begin{align*}
    & \sum_{f \in \mathcal{M}_{\leq h/2}} \frac{\tau^{(m,n)}(f^2)}{|f|} \\
    & =(-\log q)^{m+n} \sum_{j=0}^{h/2} q^{-j} \sum_{f \in \mathcal{M}_j} \sum_{f^2=f_1 f_2} d(f_1)^m d(f_2)^n \\
    & =\frac{(-\log q)^{m+n}}{2 \zeta_q(2)} \int_0^1 \left( x^{m+1} (2-x)^n+x^{n+1} (2-x)^m \right) dx \sum_{j=0}^{h/2} j^{m+n+2}+O \left( \sum_{j=0}^{h/2} j^{m+n+1} \right) \\
    & =\frac{(-\log q)^{m+n}}{2^{m+n+4} (m+n+3) \zeta_q(2)} \cdot h^{m+n+3} \int_0^1 \left( x^{m+1} (2-x)^n+x^{n+1} (2-x)^m \right) dx+O(h^{m+n+2}).
\end{align*}
Recalling the definition of $A(m,n)$ and choosing $h \in \{2g,2g-1\}$ completes the proof.

\end{proof}

We are now ready to prove Theorem \ref{joint moment thm}.

\begin{proof}[Proof of Theorem \ref{joint moment thm}]
We begin with the approximate functional equation for the product of two shifted $L$-functions given in \cite[Lemma 2.1]{BFK23}. Namely, we have that

\begin{equation*}
    L(\tfrac{1}{2}+\alpha,\chi_P) L(\tfrac{1}{2}+\beta,\chi_P)=\sum_{f\in\mathcal{M}_{\leq 2g}} \frac{\tau_{\alpha,\beta}(f) \chi_P(f)}{|f|^{1/2}}+q^{-2g(\alpha+\beta)} \sum_{f\in\mathcal{M}_{\leq 2g-1}} \frac{\tau_{-\alpha,-\beta}(f) \chi_P(f)}{|f|^{1/2}}.
\end{equation*}
Using the approximate functional equation, we write

\begin{equation} \label{expression 1}
   \frac{1}{|\P|} \sum_{P \in \P} L(\tfrac{1}{2}+\alpha,\chi_P) L(\tfrac{1}{2}+\beta,\chi_P)=F_{2g}(\alpha,\beta)+q^{-2g(\alpha+\beta)} F_{2g-1}(-\alpha,-\beta),
\end{equation}
where

\begin{equation*}
    F_{2g}(\alpha,\beta):=\sum_{f\in\mathcal{M}_{\leq 2g}} \frac{\tau_{\alpha,\beta}(f)}{|f|^{1/2}} \frac{1}{|\P|} \sum_{P\in\P} \chi_P(f),
\end{equation*}
and $F_{2g-1}(\alpha,\beta)$ is given by a similar expression with $\mathcal{M}_{\leq 2g}$ replaced by $\mathcal{M}_{\leq 2g-1}$. For integers $m,n \geq 0$ and $h \in \{2g,2g-1\}$, we then have that

\begin{align*}
    \frac{\partial^{m+n}}{\partial\alpha^m \partial\beta^n} F_h(\alpha,\beta) |_{\alpha=\beta=0}=T_h(m,n)=\sum_{f\in\mathcal{M}_{\leq h}} \frac{\tau^{(m,n)}(f)}{|f|^{1/2}} \frac{1}{|\P|} \sum_{P\in\P} \chi_P(f).
\end{align*}
Thus, differentiating (\ref{expression 1}) with respect to $\alpha$ and $\beta$ leads to 

\begin{align*}
    & \frac{1}{|\P|} \sum_{P \in \P} L^{(\mu)}(\tfrac{1}{2},\chi_P) L^{(\nu)}(\tfrac{1}{2},\chi_P) \nonumber \\
    &\qquad =T_{2g}(\mu,\nu)+(-1)^{\mu+\nu} \sum_{m=0}^{\mu} \sum_{n=0}^{\nu} \binom{\mu}{m} \binom{\nu}{n} (2g \log q)^{\mu+\nu-m-n} \, T_{2g-1}(m,n).
\end{align*}
Theorem \ref{joint moment thm} then follows on applying the asymptotic formula for $T_h(m,n)$ in Lemma \ref{T asymptotic}.
\end{proof}

\vspace{0.5cm}

\noindent \textit{Acknowledgments.}
I would like to thank my supervisor Julio Andrade for suggesting this problem to me and for his guidance throughout. The research of the author was supported by an EPSRC Standard Research Studentship (DTP) at the University of Exeter.



\end{document}